\newtheorem{theorem}{Theorem}
\newtheorem{lemma}[theorem]{Lemma}
\newtheorem{prop}[theorem]{Proposition}
\numberwithin{equation}{section}
\numberwithin{theorem}{section}
 \newcommand{\F}{\mathbb{F}}
\newcommand{\K}{\mathbb{K}}
\newcommand{\U}{U}
\newcommand{\X}{Z}
\DeclareMathOperator{\Char}{char}
\newcommand{\Fq}{\mathbb{F}_q}
\newcommand{\Fqb}{\overline{\mathbb{F}}_q}
\def\cN{{\mathcal N}}
\def\cO{{\mathcal O}}
\title[Linear complexity of  hyperelliptic curve sequences]{Linear complexity of some sequences derived from hyperelliptic curves of genus 2}
\author[V.~Anupindi]{Vishnupriya Anupindi}
 \author[L.~M{\'e}rai]{L{\'a}szl{\'o} M{\'e}rai}
 \address{Johann Radon Institute for Computational and Applied Mathematics, Austrian Academy of Sciences and Institute of Financial Mathematics and Applied Number Theory,
 Johannes Kepler University,  Altenberger Stra\ss e 69, A-4040 Linz, Austria} 
 \email{vishnupriya.anupindi@oeaw.ac.at}
 \email{laszlo.merai@oeaw.ac.at}
\begin{document}

\begin{abstract}
For a given hyperelliptic curve $C$ over a finite field with Jacobian $J_C$, we consider the hyperelliptic analogue of the congruential generator defined by  $W_n=W_{n-1}+D$ for $n\geq 1$ and $D,W_0\in J_C$. We show that curves of genus 2 produce sequences with large linear complexity.
\end{abstract}

\keywords{elliptic curve, hyperelliptic curve, linear complexity, congruential generator}

\subjclass[2020]{11G05, 11G20, 11K45, 11T71 }
\maketitle

\section{Introduction}

Let $\Fq$ be a finite field with characteristic $p\geq 3$ and consider the hyperelliptic curve $C$ of genus $g\geq 1$ defined by
\begin{equation} \label{def:hec}
    C: Y^2=f(X), 
\end{equation}
where $f(X)\in \Fq[X]$ is a monic polynomial of degree $2g+1$.

For details on hyperelliptic curves, see \cite{cohen2005handbook,Galbraith-book, Koblitz-book}.

We denote the $\Fq$-rational points of $C$ by $C(\Fq)$, which are the solutions over $\Fq$ of the defining equation \eqref{def:hec} together with a point $\cO$ at infinity. 
By the Hasse-Weil bound \cite[Theorem~5.2.3]{stichtenothAlgebraicFunctionFields2009}, we have
\begin{equation}\label{eq:number_of_points}
    ||C(\Fq)| -(q+1) |\leq 2g q^{1/2}.
\end{equation}

Unlike elliptic curves (curves with genus $g=1$), the points of a hyperelliptic curve with higher genus ($g\geq 2$) do not form an additive group. However, one can define a group operation by introducing the \emph{Jacobian} $J_C$ of the curve $C$.    

For an affine point $P=(x,y)\in C$, we write $-P=(x,-y)$ (and $-\cO=\cO$). A \emph{divisor} $D$ of $C(\Fqb)$ is an element of the free abelian group over the points of $C(\Fqb)$, e.\ g.\ $D =\sum_{P\in C(\Fqb)} n_P P$  with $n_P\in \mathbb{Z}$ and $n_P = 0$ for almost all points $P$.

Then any element $D\in J_C$ of the Jacobian  can be uniquely represented as a \emph{reduced divisor}
$$
\psi(D)=P_1+\dots +P_r-r\cO,
$$
where $1\leq r \leq g$, $\cO$ is the point of $C$ at infinity, $P_1, \dots, P_r\in C$, $P_i\neq \cO, 1\leq i \leq r$ and $P_i\neq -P_j$, $1\leq i<j\leq r$. The element $D$ is said to be defined over $\Fq$ if the \emph{Frobenius endomorphism}, defined by $\sigma((x,y))=(x^q,y^q)$ permutes the set $\{P_1,\dots, P_r\}$. We use $J_C(\Fq)$ to denote the set of elements of $J_C$ which are defined over $\Fq$. It follows from the Hasse-Weil Theorem   \cite[Theorem 5.1.15 and 5.2.1]{stichtenothAlgebraicFunctionFields2009}, that
\begin{equation}\label{eq:size_of_jac}
    (q^{1/2}-1)^{2g}\leq |J_{C}(\Fq)|\leq (q^{1/2}+1)^{2g}.
\end{equation}
It is common to represent the elements of the Jacobian by the \emph{Mumford representation} \cite{mumfordTataLecturesTheta2007a}. Let $D \in J_C(\Fq)$, then the Mumford representation $\eta(D)$ is a pair $[u,v]$ of polynomials such that 
	\renewcommand{\theenumi}{\alph{enumi}}
	\begin{enumerate}
		\item \label{mum-a} $u$ is monic,
		\item $u$ divides $f-v^2$,
		\item $\deg(v)< \deg(u)\leq g$.
	\end{enumerate}
Let $\psi(D)=\sum_{i=1}^r P_i-r \cO$, where $P_1, \dots, P_r\in C$, $P_i\neq \cO, 1\leq i \leq r$ and $P_i\neq -P_j$, $1\leq i<j\leq r$. Put $P_i=(x_i,y_i)$. Then $u=\prod_{i=1}^r(X-x_i)$ and $v(x_i)=y_i$ is of the same multiplicity as $P_i$ in $\psi(D)$.

One can define a group operation, denoted by $+$, on the Jacobian $J_C$. 
In Mumford coordinates, the group operation can be computed using Cantor's algorithm~\cite{cantor}. This algorithm can be made highly effective for small genus, which is the most important case in cryptographic applications.  See for example \cite{langeFormulaeArithmeticGenus2005} for the case of genus $g=2$.

\bigskip

In this paper, we study the properties of pseudorandomness of certain walks on the Jacobian $J_C(\Fq)$. Namely, let $D\in J_C(\Fq)$ and define the sequence
\begin{equation}\label{eq:S}
    W_n=D + W_{n-1}=nD + W_0, \quad n=1,2,\dots,
\end{equation}
with some initial value $W_0\in J_C(\Fq)$.

In the special case $g=1$, that is, when $C$ is an elliptic curve and $J_C\cong C$, the sequence $(W_n)$ has been suggested as a pseudorandom number generator in \cite{hallgren} and later many pseudorandom properties of this sequence have been studied \cite{BeelenDoumen,ElMaShp2001,GongBersonStinson99,GongLam2001,meraiLinearComplexityProfile2015,hessLinearComplexityMultidimensional2005,TopWinterhof2007,MeraiECAdditive,MeraiEcMult,ChenECLegendre}. 

In particular, the linear complexity of the coordinates of $(W_n)$ has been studied \cite{hessLinearComplexityMultidimensional2005, meraiLinearComplexityProfile2015,TopWinterhof2007}.
We recall, that the \textit{$N$-th linear complexity} $L(s_n,N)$ of a sequence $(s_n)$ over the finite field $\Fq$ is defined as the smallest non-negative integer $ L $ such that the first $ N $ terms of the sequence $ (s_n) $ can be generated by a linear recurrence relation over $ \Fq $ of order $ L $, i.e.\ there exist $ c_0, c_1, \dots , c_{L-1} \in \Fq $ such that 
\begin{equation*}
	s_{n+L} = c_0s_{n} + c_1s_{n+1} + \dots  + c_{L-1}s_{n+L-1},\quad  0 \leq n \leq N-L-1.
\end{equation*}
The $N$-th linear complexity measures the unpredictability of a sequence and thus is an important figure of merit in cryptography. Clearly, large linear complexity is a desired (but not sufficient) property for such applications.  For more details, see \cite{MeidlWinterhof,Nied1, Winterhof2010}.

Hess and Shparlinski \cite{hessLinearComplexityMultidimensional2005} estimated the linear complexity of the coordinates for the elliptic curve analog of \eqref{eq:S}. Namely, let $x(\cdot)$ and $y(\cdot)$ be the coordinate functions of the curve such that for any affine point $P=(x(P),y(P))$. Among others, Hess and Shparlinski \cite{hessLinearComplexityMultidimensional2005} proved
$$
L(x(W_n ),N)\geq \min\left\{\frac{N}{4},\frac{t}{3}\right\},
$$
where $t$ is order of $D$, see also \cite{meraiLinearComplexityProfile2015,TopWinterhof2007}. 

\bigskip

In this paper, we estimate the linear complexity of the Mumford coordinates of the sequence $(W_n)$ defined by \eqref{eq:S} for hyperelliptic curves of genus $g=2$. More precisely, for $D\in J_C(\Fq)$, let $\eta(D)=[u(D),v(D)]$ be its Mumford representation and write
\begin{equation}\label{eq:coordinates}
    u(D)=u_2(D)X^2+u_1(D)X+u_0(D) \quad \text{and} \quad      v(D)=v_1(D)X+v_0(D), 
\end{equation}
where $u_2(D),u_1(D),u_0(D),v_1(D),v_0(D) \in\Fq$. By    \eqref{mum-a}, $u(D)$ is monic and thus $u_2(D)=1$  for all but approximately $q$ values $D$ and otherwise $u_2(D) = 0$. Thus, one cannot expect strong randomness properties of it. 
However, for the other coefficients, our result implies the lower bound for the linear complexity
\begin{equation}\label{eq:example}
L(u_0(W_n),N)\geq \left\lfloor c
		    \frac{\min\{t,N\}}{  q }\right\rfloor, 
\end{equation}
for some absolute and explicit constant $c>0$, where $t$ is the order of $D$, see Theorem~\ref{thm:main}. A similar bound holds for the other coefficients in \eqref{eq:coordinates} (except for $u_2$).

The most promising case is when the Jacobian $J_C(\Fq)$ is close to being a cyclic group, and $D$ has order $t=q^{2+o(1)}$ (cf. \eqref{eq:size_of_jac}).

\bigskip

In Section~\ref{sec:HEC}, we recall and prove the necessary tools concerning the arithmetic of hyperelliptic curves. In particular, we recall the Grant representation \cite{grant1990formal} of the Jacobian of hyperelliptic curves of genus $g=2$ as it gives explicit formulas for the addition law. In Section~\ref{sec:preliminaries}, we recall other required preliminaries. Finally, in Section~\ref{sec:main}, we state and prove our main result.

\section{Arithmetic of hyperelliptic curve with genus \texorpdfstring{$g=2$}{g=2}}
\label{sec:HEC} 

Let $C$ be the hyperelliptic curve defined by \eqref{def:hec} with
$$
f(X) = X^5 + b_1X^4 + b_2X^3 + b_3X^2 + b_4X + b_5 \in \Fq[X],
$$
for the finite field $\Fq$ with characteristic $\Char(\Fq) \neq 2$. Let $\Fqb$ be the algebraic closure of $\Fq$.

\subsection{Grant representation}
The Jacobian $J_C$ is an abelian variety of dimension $2$  \cite[Theorem~A.8.1.1]{hindryDiophantineGeometryIntroduction2000}. In \cite{grant1990formal}, Grant provides an embedding of $J_C$ into the projective space $\mathbb{P}^8$.
Namely, the Jacobian $J_C$ can be identified with the vanishing locus of $13$ homogenous polynomials,
\begin{equation*}
    J_C \cong V(f_2^h,\dots,f_{14}^h) = \{ z \in \mathbb{P}^8 : f_i^h(z) = 0, 2 \leq i \leq 14 \} ,
\end{equation*}
where $ f_i^h \in \Fq[\X_0, \X_{11},\X_{12},\X_{22},\X_{111},\X_{112},\X_{122},\X_{222},\X]$ are homogenized with respect to the variable $\X_0$.
For the expressions $ f_i^h$, see \cref{sec:def_eq_J}.

Let 
\begin{equation}\label{eq:Theta}
\Theta(\Fq)=\{D\in J_C(\Fq): \psi(D)=P-\cO, P\in C(\Fq) \}
\end{equation}
be the preimage of $C(\Fq) - \cO$ under $\psi$. Also write $\Theta=\Theta(\Fqb)$.
Let $\iota : J_C \rightarrow \mathbb{P}^8$ be the embedding, then
\begin{equation}\label{eq:iota}
    \iota(D) = 
        \begin{cases}
            (1:z_{11}:z_{12}:z_{22}:z_{111}:z_{112}:z_{122}:z_{222}:z)  & \text{if~} D \in J_C\setminus \Theta, \\ 
            (0:0:0:0:1:0:0:0:0) & \text{if~} D = \cO ,\\
            (0:0:0:0:-x^3: - x^2 : -x : 1 : -y) & \text{if~} D \in \Theta \setminus\cO.
            
        \end{cases}
\end{equation}

For $D\in J_C(\Fq)\setminus \Theta(\Fq)$, the components $z_{ij}, z_{ijk}$ of $\iota(D)$ can be expressed as rational functions in the coordinates $(x_1,x_2,y_1,y_2)$ of $\psi(D) = (x_1,y_1) + (x_2,y_2) - 2\cO$.
Moreover, the Mumford representation
$\eta(D)= [u,v] = [X^2 + u_1X + u_0, v_1X + v_0]$ can also be expressed with respect to $(x_1,x_2,y_1,y_2)$.
Namely, write $\psi(D) =(x_1,y_1) + (x_2,y_2) - 2\cO$ ,
if $\eta(D) = [u,v] = [X^2 + u_1X + u_0, v_1X + v_0]$, then the relation between $\iota (D)$ and $\eta(D)$ is given as follows: 

For the case $x_1 \neq x_2 $, we have 
        \begin{align}\label{eq:repr}
          z_{12} &= -x_1x_2 =-  u_0 ,  &z_{22} &= x_1+x_2 =-u_1  , \\ \notag
          z_{122}  &= \frac{x_1y_2- x_2y_1}{x_1-x_2} =  v_0 , &z_{222}& = \frac{y_1-y_2}{x_1-x_2} = v_1,  
        \end{align}
        \begin{align*}
        	z_{11}= &\frac{(x_1 + x_2)(x_1x_2)^2 + 2b_1(x_1x_2)^2 + b_2(x_1 + x_2)x_1x_2 + 2b_3x_1x_2}{(x_1-x_2)^2}\\
        	& +\frac{b_4(x_1 + x_2) + 2b_5 - 2y_1y_2}{(x_1-x_2)^2}.
        \end{align*} 
For the case $x_1 = x_2$ and $y_1 = y_2\neq 0$, we consider the identifications on page 109 of \cite{grant1990formal} and after performing some elementary computations, we obtain:
        \begin{align*}
          z_{12} &= -x_1^2 =  - u_0, & z_{22} &= 2x_1 = -u_1  \\
            z_{122} &= -\frac{f'(x_1)}{2y_1}x_1 + y_1 = v_0 , & z_{222}& = \frac{f'(x_1)}{2y_1} = v_1,  \\
            z_{11} &= \Big( \frac{f'(x_1)}{2y_1} \Big)^2 - 6x_1^3 - 4b_1x_1^2 - 2b_2x_1 - b_3
        \end{align*}
For the case $x_1 = x_2$ and $y_1 = -y_2$, we obtain $\eta(D) = [u,v] = [1,0]$ and $\iota(D) = (0:0:0:0:1:0:0:0:0)$.

The coordinates $z_{111},z_{112}$ and $z$ are given by the defining polynomials $f_4, f_3$ and $f_2$ respectively. See \cref{app:rational function}.

We denote the affine part of $J_C$ under $\iota$ with respect to $\X_0$ by $U$. Then 
$$
U = J_C\setminus \Theta.
$$
%
Moreover, 
by \cite[Corollary~2.15]{grant1990formal}, we have
$$
\U=V(f_2,\dots, f_7), f_i \in
\Fq[\mathbf{\X}] = \Fq[\X_{11},\X_{12},\X_{22},\X_{111},\X_{112},\X_{122},\X_{222},\X]
.
$$
Since $ J_C$ 
is a variety, it is irreducible. It follows that $ U $
is irreducible and dense, see \cite[Example 1.1.3 ]{hartshorneAlgebraicGeometry1977}. Since $J_C$ has dimension $2$, the dimension of $ U $ is also $2$.

Let $ \Fq(U) $ be the function field of $ \U $. Since $ \U $ is a dense open subset of $J_C$, one can show that $ \Fq(U) = \Fq(J_C)$, see \cite[Theorem~3.4]{hartshorneAlgebraicGeometry1977}.

Let $ h \in \Fq(U) $ be a rational function. Since $h$ is an equivalence class, we can choose a representative element $\frac{h_1}{h_2}$, where $\deg h_1$ is minimal and we define
$$
\deg h =\max\{\deg h_1, \deg h_2\}.
$$

\subsection{Group law}
We describe the group law $(Q,R) \mapsto Q+R$ for the most common case, that is, if all elements $Q,R,Q+R,Q-R$ belong to $\U$, see  \cite[Theorem 3.3]{grant1990formal}.

\begin{lemma}\label{lemma:addition_grant}
Assume that $Q,R,Q+R,Q-R \in U$.
We write
  \begin{equation}\label{eq:q}
  q(Q,R)=z_{11}(Q)-z_{11}(R)+z_{12}(Q)z_{22}(R)-z_{12}(R)z_{22}(Q).
  \end{equation}
Then there are rational functions $q_1,q_2,q_{11},q_{12},q_{22},q_{111} $ on $U\times U$ such that for $1\leq i\leq j\leq 2$ we have
\begin{equation}\label{eq:add1}
       z_{ij}(Q+R) = -z_{ij}(Q)-z_{ij}(R) + \frac{1}{4}\left( \frac{q_{i}(Q,R)}{q(Q,R)} \right) \left( \frac{q_{j}(Q,R)}{q(Q,R)} \right) -\frac{1}{4} \left( \frac{q_{ij}(Q,R)}{q(Q,R)} \right)
\end{equation} 
 and
 \begin{equation}\label{eq:add2}
    \begin{split}
         z_{111}(Q+R)= &-\frac{1}{2}z_{111}(Q)-\frac{1}{2}z_{111}(R) + \frac{3}{16}\frac{q_{1}(Q,R)q_{11}(Q,R)}{q(Q,R)^2}-\frac{1}{16}\frac{q_{111}(Q,R)}{q(Q,R)} \\
     &- \frac{1}{8} \left( \frac{q_{1}(Q,R)}{q(Q,R)} \right) ^3 
     + \frac{3}{4}(z_{11}(Q)+z_{11}(R))\frac{q_{1}(Q,R)}{q(Q,R)}, \\
    z(Q+R) = &\frac{1}{2} \left( z_{11}(Q+R)z_{22}(Q+R) - z_{12}^2(Q+R) + b_2z_{12}(Q+R) -b_4 \right).
    \end{split}
\end{equation}
\end{lemma}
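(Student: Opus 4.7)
The statement is (essentially) Theorem~3.3 of \cite{grant1990formal}, and my plan is to follow Grant's derivation, adapted to the algebraic setting over $\Fq$.

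The rational functions $q_i, q_{ij}, q_{111}$ are explicit polynomials in the coordinates $z_{11}(Q),\dots,z(Q)$ and $z_{11}(R),\dots,z(R)$, whose precise form is supplied in Section~3 of \cite{grant1990formal}. The function $q(Q,R)$ defined in \eqref{eq:q} is the algebraic counterpart of the Wronskian appearing in the Kleinian sigma-function addition theorem; under the hypothesis $Q, R, Q+R, Q-R \in U$, it is nonvanishing, so that the right-hand sides of \eqref{eq:add1}--\eqref{eq:add2} are regular at $(Q,R)$.

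To prove the identities, I would first verify them on the Zariski-dense open subset of $U \times U$ where $\psi(Q) = (x_1,y_1) + (x_2,y_2) - 2\cO$ and $\psi(R) = (x_3,y_3) + (x_4,y_4) - 2\cO$ with pairwise distinct $x_i$. On this locus Cantor's algorithm expresses the Mumford coordinates of $Q+R$ as explicit rational functions of $(x_1,\dots,x_4,y_1,\dots,y_4)$; combined with the formulas \eqref{eq:repr} and the counterparts for $z_{11}, z_{111}, z_{112}, z$ recorded in \cref{app:rational function}, this transports both sides of \eqref{eq:add1}--\eqref{eq:add2} to rational functions in $x_1,\dots,y_4$. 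Clearing the common denominator reduces each assertion to a polynomial identity. Once it is verified on this open subset, the identity extends by Zariski density to the full locus of $(Q,R) \in U \times U$ with $Q \pm R \in U$, since both sides are regular there.

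The main obstacle is the sheer size of the polynomial identity at the previous step: a direct symbolic verification over $\Fq$ is only feasible with computer algebra. Grant bypasses this by lifting to characteristic zero and invoking the sigma function $\sigma$ attached to a complex hyperelliptic Jacobian: the three-term theta relation for $\sigma$ yields the addition law for the Kleinian $\wp$-functions $\wp_{ij} = -\partial_i \partial_j \log \sigma$ and $\wp_{ijk}$, which are identified with (the negatives of) the coordinates $z_{ij}, z_{ijk}$ through \eqref{eq:iota}. Since the resulting universal identity has coefficients in $\mathbb{Z}[1/2][b_1,\dots,b_5]$ and $\Char(\Fq)\neq 2$, it specializes to our setting. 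The hypothesis $Q,R,Q\pm R\in U$ ensures that no spurious poles appear in the specialization, so the lemma follows.
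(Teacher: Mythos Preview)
Your proposal is correct and aligns with the paper's treatment: the paper does not prove this lemma at all but simply states it as a direct citation of \cite[Theorem~3.3]{grant1990formal}, deferring the explicit formulas for $q_i,q_{ij},q_{ijk}$ to the appendix. Your sketch of Grant's analytic lifting argument (sigma functions over $\mathbb{C}$, reduction modulo $\mathfrak{p}$) and the alternative direct Cantor-algorithm verification goes further than what the paper supplies, but it is faithful to the original source and nothing more is needed here.
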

 The definitions of the rational functions $q, q_1,q_2,q_{11},q_{12},q_{22},q_{111} $ are in \cref{sec:addition_formulas}. We also computed formulas for $z_{112}, z_{122}, z_{222}$ which are listed in \cref{sec:addition_formulas}.

For a fixed $ R \in \U$, we define 
 \begin{equation}\label{eq:add_fn}
 	z_{ij}^R(Q) = z_{ij}(Q+R),
 	\quad
 	z_{ijk}^R(Q) = z_{ijk}(Q+R),
 	\quad
 	z^R = z(Q+R).
 \end{equation}
If we consider $z_{ij}^R(Q), z_{ijk}^R(Q)$ to be polynomials in the variables $z_{ij}(Q), z_{ijk}(Q)$ and $z(Q)$, then it follows from \eqref{eq:add1}, \eqref{eq:add2} and \cref{sec:addition_formulas} that 
 \begin{equation} \label{eq:add_deg}
 	\deg z_{ij}^R \leq 3 \text{~  ,  ~} \deg z_{ijk}^R \leq 4 \text{~  and  ~} \deg z^R \leq 6.
 \end{equation}

 \begin{lemma}\label{lem:1}
  Let $q(Q,R)$ be defined by \eqref{eq:q} and set $q_R(Q) = q(Q,R)$.  Then for any fixed $R\in \U$,  the zero set $\{ q_R(Q)=q(Q,R)=0 \}$ has dimension one and $\Theta \pm R\subset\{q_R=0\}$. Moreover
  if $R\neq \pm R'$, then 
  \begin{equation}\label{eq:cardinality_zeros_q_1}
    |\{q_{R} = 0\} \cap \{q_{R'} = 0\} \cap \U| \leq 20.
\end{equation}
 \end{lemma}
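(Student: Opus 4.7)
My plan for \Cref{lem:1} is as follows.

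\textbf{Dimension and containment of $\Theta \pm R$.} I will first observe that $q_R(Q) = z_{11}(Q) - z_{11}(R) + z_{12}(Q)z_{22}(R) - z_{12}(R)z_{22}(Q)$ is linear in $z_{11}(Q)$ with coefficient $1$, hence a non-constant regular function on the irreducible $2$-dimensional affine variety $\U$; so $\dim\{q_R=0\}\leq 1$. For the containment $\Theta-R\subset\{q_R=0\}$ I appeal to the addition formula \eqref{eq:add1}, which presents $z_{ij}(Q+R)$ as a rational function of $Q$ whose denominator is a power of $q_R(Q)$. Since the Grant coordinates $z_{ij}$ have their polar set equal to $\Theta$, the function $Q\mapsto z_{ij}(Q+R)$ has a pole along $\Theta-R$, which forces $q_R$ to vanish there. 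For the other translate, the key observation is that $q_R$ depends on $R$ only through $z_{11}(R), z_{12}(R), z_{22}(R)$, each of which is invariant under $R\mapsto -R$ (the Mumford $u$-polynomial is preserved, and $z_{11}$ involves $R$ only through the product $y_1y_2$ of the Weierstrass $y$-coordinates). Thus $q_R = q_{-R}$, and applying the previous step with $-R$ in place of $R$ yields $\Theta+R = \Theta-(-R) \subset \{q_R=0\}$. Combined with the upper bound, $\dim \{q_R = 0\} = 1$.

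\textbf{Reducing the intersection to a line in $\mathbb{A}^3$.} Fix $R\neq\pm R'$ and take $Q\in\{q_R=0\}\cap\{q_{R'}=0\}\cap \U$. Subtracting the two defining equations (the $z_{11}(Q)$ terms cancel) yields
\begin{equation*}
(z_{11}(R') - z_{11}(R)) + (z_{22}(R) - z_{22}(R'))\,z_{12}(Q) + (z_{12}(R') - z_{12}(R))\,z_{22}(Q) = 0,
\end{equation*}
an affine-linear relation in $(z_{12}(Q), z_{22}(Q))$. If all three coefficients vanish, the projection $\pi : D\mapsto (z_{11}(D), z_{12}(D), z_{22}(D))$ agrees on $R$ and $R'$; since $\pi$ is generically $2$-to-$1$ with fibre $\{D,-D\}$ (the triple determines $u(D) = X^2 - z_{22}(D)X - z_{12}(D)$ and the product $y_1y_2$, hence $D$ up to negation), this forces $R=\pm R'$, contradicting the hypothesis. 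If only the two linear coefficients vanish, the system is inconsistent and the intersection is empty. Otherwise the relation is non-degenerate and, combined with $q_R(Q)=0$, it expresses $(z_{11}(Q), z_{12}(Q), z_{22}(Q))$ as an affine-linear function of a single parameter $t$. Hence $\pi(Q)$ traces a line $\ell \subset \mathbb{A}^3$.

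\textbf{Degree count --- the crux.} Every $Q\in\U$ satisfies the polynomial identity $K(a,b,c) := (2y_1y_2)^2 - 4f(x_1)f(x_2) = 0$, where $2y_1y_2$ is written as a polynomial in $(a,b,c) = (z_{11}, z_{12}, z_{22})$ via the identity $(c^2+4b)a = P(b,c) - 2y_1y_2$ for an explicit $P$ coming from the formula for $z_{11}$, and $f(x_1)f(x_2)$ is expressed as a symmetric polynomial in the roots of the Mumford $u$-polynomial $X^2 - cX - b$. Substituting the affine-linear parametrization of $\ell$ into $K$ produces a single-variable polynomial in $t$ whose degree I bound --- using that $K$ is weighted-homogeneous of weight $10$ under the natural weights $(3,2,1)$ on $(z_{11}, z_{12}, z_{22})$ and weight $i$ on $b_i$ --- by at most $10$. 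Each admissible $t$ gives one point of $\pi(\U)\cap\ell$ with at most two preimages $\pm Q$ in $\U$ under $\pi$, yielding $|\{q_R=0\}\cap\{q_{R'}=0\}\cap\U| \leq 2\cdot 10 = 20$. The main technical obstacle is this degree count: one must carefully track how the $5$ coefficients $b_1, \ldots, b_5$ enter the symmetric polynomial $f(x_1)f(x_2)$ in $s = z_{22}$ and $p = -z_{12}$, and verify that the linear specialization to $\ell$ is compatible with the weighting convention so the stated degree bound of $10$ really holds.
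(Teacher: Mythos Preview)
Your arguments for dimension one and for $\Theta\pm R\subset\{q_R=0\}$ are valid and in fact a bit slicker than the paper's: you read the containment $\Theta-R\subset\{q_R=0\}$ directly off the denominators in \eqref{eq:add1} and then obtain $\Theta+R$ from the evenness $q_R=q_{-R}$, whereas the paper lifts $C$ to characteristic zero and quotes Grant's description of the zero locus there before reducing. For the dimension bound, the paper exhibits two explicit points $Q_1,Q_2\in U$ with the same $(z_{12},z_{22})$ but different $z_{11}$ (flip the sign of one $y$-coordinate); your ``linear in $z_{11}$ with coefficient $1$'' is the same idea compressed.

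For the cardinality bound there is a genuine gap. Your reduction to a line $\ell\subset\mathbb{A}^3$ and substitution into $K=(2y_1y_2)^2-4f(x_1)f(x_2)$ matches the paper's setup, and your weighted-degree observation correctly gives $\deg_t K|_\ell\le 10$ (in fact this is loose: the ordinary total degree of $K$ is only $6$). But you never verify that $K|_\ell$ is \emph{non-zero}; if $\ell\subset\{K=0\}$ the degree bound is vacuous and you get no finiteness at all. This is precisely where the paper does the real work. It splits into two cases according to whether the cubic part $g$ of the expression for $2y_1y_2$ survives the linear specialization. In the generic case it does, and the resulting equation in the parameter has non-zero top coefficient of degree $6$, giving at most $12$ points. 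When $g$ drops to degree $\le 2$, the paper abandons the linear parametrization altogether and instead parametrizes by $x_1$ via $x_2=(c_1-a_{12}x_1)/(a_{12}-a_{11}x_1)$; after clearing denominators the left side $4f(x_1)f(x_2)$ has degree exactly $10$ in $x_1$ while the right side has degree at most $9$, so the equation is forced to be non-trivial --- and it is this second case that produces the bound $20$. The ``main technical obstacle'' you flag (tracking how $b_1,\dots,b_5$ enter the symmetric polynomial to confirm the upper degree bound) is not the crux; ruling out $\ell\subset\{K=0\}$ is.
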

  \begin{proof}
 Let $\bar{C}$ be the lifted curve of $C$ defined over some number field $\mathbb{K}$ and let $\mathfrak{p}$ be a prime ideal such that $C\equiv \bar{C} \bmod \mathfrak{p}$. For $\bar{C}$, the zero set of $q_{\bar{R}}$ is $\Theta(\bar{\mathbb{K}}) \pm \bar{R}$, see \cite{grant1990formal}. Thus, the points in $\Theta \pm R$ are zeros of $q_R$.
We show that 
 \begin{equation}\label{eq:subvariety}
 \{q_R=0\} \cap \U     
 \end{equation}
 has dimension one. As $\U$ is irreducible, it is enough to show that \eqref{eq:subvariety} is a proper subset. Indeed, suppose that $q_R$ vanishes on $U$. Let $Q_1,Q_2\in U(\Fq)$ corresponding to the pairs of points $(P_1,P_2)$ and $(P_1, -P_2)$ with
 \begin{equation}\label{eq:yneq0}
 P_i=(x_i,y_i),\quad  y_i\neq 0,\quad  i=1,2. 
 \end{equation}
 Then their first component in the Mumford representation are the same, but with different second components.
 
 Substituting to $q_R$, we get by the assumption that
 $$
 q_R(Q_1)=q_R(Q_2)=0
 $$
 which yields by \eqref{eq:repr} and \eqref{eq:q} that $y_1y_2=-y_1y_2$ which contradicts \eqref{eq:yneq0}. 
 
 In order to show \eqref{eq:cardinality_zeros_q_1}
 for $R\neq \pm R'$, $R,R'\in \U$, consider $\U$ as embedded in $\mathbb{A}^8$ with respect to the coordinates $z_{11},z_{12},z_{22},z_{111},z_{112},z_{122},z_{222}, z $. Then for all $R\in \U$, $q_R=0$ defines a seven dimensional affine hyperspace in $\mathbb{A}^8$. Thus the hyperspaces corresponding to $R$ and $R'$ are the same if
 $$
 z_{11}(R)=z_{11}(R'), \quad  z_{12}(R)=z_{12}(R') \quad \text{and} \quad  z_{22}(R)=z_{22}(R') 
 $$
 and thus $R = \pm R'$.

Consider the solutions
 $$
 q_R(z_{11},z_{12}, z_{22})=q_{R'}(z_{11},z_{12}, z_{22})=0, \quad $$
 where $(z_{11},z_{12}, z_{22},z_{111} ,z_{112},z_{122},z_{222}, z)\in \U$.
 As $R\neq \pm R'$, the hyperplanes defined by  $q_R$ and $q_{R'}$ are distinct. Moreover, the linear equation system
 \begin{align*}
 q_R(z_{11},z_{12}, z_{22})= z_{11}-z_{11}(R)+z_{12}z_{22}(R)-z_{12}(R)z_{22}&=0,\\
 q_{R'}(z_{11},z_{12}, z_{22})= z_{11}-z_{11}(R')+z_{12}z_{22}(R')-z_{12}(R')z_{22}&=0
 \end{align*}
 has full rank and can be transformed to the form
 \begin{equation}\label{eq:LES}
  \begin{pmatrix}
  0 & a_{11} & a_{12}\\
  1 & a_{21} & a_{22}
  \end{pmatrix}
  \begin{pmatrix}
  z_{11}\\ z_{12}\\ z_{22}
  \end{pmatrix}
  =
  \begin{pmatrix}
  c_1\\ c_2
  \end{pmatrix}
 \quad \text{with} \quad
 (a_{11},a_{12})
  \neq (0,0).
 \end{equation}
 As $(z_{11},z_{12}, z_{22},z_{111} ,z_{112},z_{122},z_{222}, z)\in \U$, we can write
 \begin{align}\label{eq:z11}
     z_{11}= \frac{z_{22}z_{12}^2+2b_1 z_{12}^2-b_2z_{22}z_{12}-2b_3z_{12}+b_4z_{22}+2b_5-2y_1y_2 }{z_{22}^2+4z_{12}},
 \end{align}
 where $y_1, y_2$ are the $y$-coordinate of the points $(x_1,y_1),(x_2,y_2)\in C$ such that
 \begin{equation}\label{eq:z}
x_1x_2=-z_{12} \quad \text{and} \quad  x_1+x_2=z_{22}.
\end{equation}
 Substituting \eqref{eq:LES} into \eqref{eq:z11}, we obtain
 \begin{align*}
     2y_1y_2=&
     (z_{22}^2+4z_{12})\left(a_{21}z_{12} +a_{22}z_{22}-c_2\right)
     +z_{22}z_{12}^2+2b_1 z_{12}^2-b_2z_{22}z_{12}\\
     &-2b_3z_{12}
     +b_4z_{22}+2b_5.
 \end{align*}
Taking the square and substituting $y_i^2=f(x_i)$, ($i=1,2$), we get

\begin{align}\label{eq:sym}
     4 f(x_1)f(x_2)=
     &\Big((z_{22}^2+4z_{12})\left(a_{21}z_{12} +a_{22}z_{22}-c_2\right)
     +z_{22}z_{12}^2+2b_1 z_{12}^2\\ \notag
     &-b_2z_{22}z_{12}
     -2b_3z_{12}+b_4z_{22}+2b_5\Big)^2.
 \end{align}
 
Assume, that $a_{11}\neq 0$. The case $a_{11}=0$ and $a_{12}\neq 0$ can be handled similarly and we leave it to the reader.
 
Write
$$
g(z_{12},z_{22})= z_{22}^2 \left(a_{21}z_{12} +a_{22}z_{22}\right)+z_{22}z_{12}^2.
$$
 
First, we consider the case that
\begin{equation}\label{eq:g}
g(z_{12},z_{22})=g\left(\frac{c_1-a_{12}z_{22}}{a_{11}},z_{22}\right)
\end{equation}
has degree 3. As $f(x_1)f(x_2)$ is a symmetric polynomial in $x_1$ and $x_2$, by \eqref{eq:z} we can write
$$
f(x_1)f(x_2)=F(z_{12},z_{22}), \quad F\in \F_q[z_{12},z_{22}], \quad \deg F\leq 5.
$$
Hence, after the substitution $z_{12}=(c_1-a_{12}z_{22})/a_{11}$, the polynomial equation~\eqref{eq:sym} is non-trivial with degree at most 6. Thus, 
by~\eqref{eq:LES}, we obtain at most 12 possible solutions for $(z_{11},z_{12}, z_{22},z_{111} ,z_{112},z_{122},z_{222}, z)$.
 
Now consider the case, when~\eqref{eq:g} has degree at most two. By \eqref{eq:z} and \eqref{eq:x2} we have
\begin{equation}\label{eq:x2}
x_2=\frac{c_1-a_{12}x_1}{a_{12}-a_{11}x_1}.
\end{equation}
Then substituting it into~\eqref{eq:sym}, after clearing the denominator, we get that the left-hand side has degree 10 while the right-hand side has degree at most 9. Thus, there are at most $10$ solutions for $x_1$, and therefore at most $20$ solutions for $$(z_{11},z_{12}, z_{22},z_{111} ,z_{112},z_{122},z_{222}, z)$$ by \eqref{eq:LES}, \eqref{eq:z}  and \eqref{eq:x2} which proves the result.
\end{proof}

\begin{lemma}\label{lemma:intersection}
For $D \in U(\Fq)$ we have
$$
|\{\Theta(\Fq) + D\} \cap \Theta(\Fq)| \leq 2.
$$
\end{lemma}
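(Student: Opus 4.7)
\emph{Proof plan.} I will use the divisor theory of the genus-$2$ hyperelliptic curve $C$, relying on two standard facts: (i) on our model $Y^2=f(X)$ with $\deg f=5$, the canonical class satisfies $K\sim 2\cO$, and the linear system $|K|$ consists precisely of the effective divisors of the form $P+\tau P$, where $\tau\colon (x,y)\mapsto(x,-y)$ is the hyperelliptic involution (fixing $\cO$); and (ii) by Riemann--Roch on a genus-$2$ curve, two effective divisors of degree $2$ are linearly equivalent if and only if either they coincide as divisors, or they both lie in $|K|$.

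Any $T\in\Theta(\Fq)$ with $T+D\in\Theta(\Fq)$ must satisfy $T\neq 0$ in $J_C$ (otherwise $T+D=D$ would lie outside $\Theta$, contradicting $D\in U$), so $\psi(T)=P-\cO$ for some $P\in C(\Fq)\setminus\{\cO\}$; similarly $T+D$ is either the identity (the edge case $T=-D$) or satisfies $\psi(T+D)=P'-\cO$ with $P'\in C(\Fq)\setminus\{\cO\}$. Assume for contradiction that three distinct such $T_1,T_2,T_3$ exist, yielding pairwise distinct $P_1,P_2,P_3$ and corresponding $P_1',P_2',P_3'$. The identity $T_i-T_j=(T_i+D)-(T_j+D)$ in $J_C$ translates to the divisor equivalence
\begin{equation*}
P_i+P_j' \;\sim\; P_i'+P_j
\end{equation*}
on $C$ for every $i\neq j$, as effective divisors of degree $2$.

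I then apply (ii) to each pair. If the two divisors coincide, the multiset equality $\{P_i,P_j'\}=\{P_i',P_j\}$ combined with $P_i\neq P_j$ forces $P_i=P_i'$ and $P_j=P_j'$, giving $D=0$ and contradicting $D\in U$. Hence every pair must lie in the canonical case, which by (i) yields $P_j'=\tau P_i$ (and $P_i'=\tau P_j$). Comparing the pairs $(1,2)$ and $(1,3)$ gives $P_1'=\tau P_2=\tau P_3$, so $P_2=P_3$, a contradiction. The edge case $T_i=-D$ (i.e.\ $P_i'=\cO$) is handled analogously by a direct check: the canonical case would require $\cO+P_j\sim K$ and hence $P_j=\tau\cO=\cO$, while the equality case forces $P_i=\cO$ or $P_i=P_j$; both are excluded, so if $-D\in\Theta(\Fq)$ lies in the intersection then no other point of $\Theta(\Fq)$ can, giving $|S|\le 1$ in this subcase. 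In every case $|S|\le 2$. The main subtle point will be ensuring the Riemann--Roch dichotomy (ii) and the shape of $|K|$ on the affine model are applied correctly, including the Weierstrass degenerations where $P=\tau P$; after that the rest is a clean finite case analysis.
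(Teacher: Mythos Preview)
Your argument is correct. The Riemann--Roch dichotomy (ii) and the description of $|K|$ in (i) are standard for a genus-$2$ curve with a single point at infinity, and your contradiction from three distinct $T_i$ goes through. One small remark: the edge case $T_i=-D$ is in fact vacuous, since $D\in U$ implies $-D\in U$ (if $\psi(D)=P_1+P_2-2\cO$ with $P_1\neq -P_2$, then $\psi(-D)=\tau P_1+\tau P_2-2\cO$ is again reduced of length $2$), so $-D\notin\Theta(\Fq)$ and no $T_i$ can equal $-D$. Your treatment of it is harmless, just unnecessary.

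The paper proceeds quite differently: it does a direct, elementary case split on the shape of $\psi(D)$. If $\psi(D)=P+Q-2\cO$ with $P,Q\in C(\Fq)$ distinct, it exhibits the two intersection points explicitly as $[-P-\cO]$ and $[-Q-\cO]$; if $\psi(D)=2P-2\cO$ there is exactly one; and if $\psi(D)=P'+Q'-2\cO$ with $P',Q'$ conjugate over $\Fq$ there are none. This is essentially the observation that $T+D\in\Theta$ forces $\psi(T)+\psi(D)$ to collapse to a weight-$1$ reduced divisor, which pins $T$ to $[-P-\cO]$ or $[-Q-\cO]$.

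Your approach is more structural and curve-theoretic; it would generalize to other settings where one controls $|K|$ and the degree-$2$ linear equivalence classes. The paper's approach is more hands-on, uses nothing beyond the definition of reduced divisors, and as a bonus yields the exact intersection cardinality in each case rather than just the bound $\le 2$.
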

\begin{proof}
We want to count the number of elements $\mathcal{P}_i \in \Theta(\Fq)$ such that 
$$
\mathcal{P}_i + D = \mathcal{P}_j \quad \text{for some~} \mathcal{P}_j \in \Theta(\Fq).
$$
Since $D \in  U(\Fq)$, we have
$$
\psi(D) = \begin{cases}
P + Q - 2\cO, &P,Q \in C(\Fq), P \neq Q, \\
2P - 2\cO, &P \in C(\Fq), \\
P' + Q' - 2\cO, &P',Q' \in C(\F_{q^2}), P', Q' \text{~are conjugates over~} \Fq. 
\end{cases}
$$
If $\psi(D) = P + Q - 2\cO, P,Q \in C(\Fq), P \neq Q$, then for 
$$
\psi(\mathcal{P}_i) = \begin{cases}
-P -\cO, &\psi(\mathcal{P}_i + D) = Q -\cO \in \Theta(\Fq) \\
-Q -\cO, &\psi(\mathcal{P}_i + D) = P -\cO \in \Theta(\Fq) ,
\end{cases}
$$
we get 2 intersection points.

If $\psi(D) = 2P - 2\cO, P \in C(\Fq)$, then for $\psi(\mathcal{P}_i) = -P-\cO$, we get $\psi(\mathcal{P}_i + D) = P -\cO \in \Theta(\Fq)$. We only get one intersection point.

If $\psi(D) = P' + Q' - 2\cO, P',Q' \in C(\F_{q^2})$ and $P', Q' \text{~are conjugates over~} \Fq$, we get no intersection points, since if $\mathcal{P}_i + D = \mathcal{P}_j, $ where $\mathcal{P}_i, \mathcal{P}_j \in \Theta(\Fq)$, with $\psi(\mathcal{P}_i) = Q_i - \cO$ and $\psi(\mathcal{P}_j) = Q_j - \cO, Q_i, Q_j \in C(\Fq)$ we get 
$$
\psi(D) = P' + Q' - 2\cO = \psi(\mathcal{P}_j + (-\mathcal{P}_i)) = Q_j - Q_i - 2\cO
$$
which is a contradiction.
\end{proof}

\begin{prop}\label{prop:main_1}
	Let $ h \in \Fq(U)$ be a rational function with pole divisor of the form $ n\Theta$, for $n \geq 1$. Let $ W_0 \in J_C(\Fq) $ and $ D \in J_C(\Fq) $ be an element of order $ t $. Let $L$ be a positive integer with 
	\begin{equation}\label{eq:prop-L}
	L < \min\left\{ \frac{t-1}{2} - |\Theta(\Fq)| , \frac{|\Theta(\Fq)| - 2}{20}\right\}. 
	\end{equation}
	Let $ j_0< j_1< \dots< j_{L} \leq L + |\Theta(\Fq)|+1$ be positive integers such that $j_iD + W_0 \not \in \Theta(\Fq)$. Let $ c_0, \dots , c_L  \in \Fq$ with $ c_L \neq 0 $. Then the rational function $H \in \Fq(U)$, with 
	\begin{equation*}
	H(Q) = \sum_{l=0}^{L} c_l h(Q + j_l D + W_0)
	\end{equation*}
	is non-constant and has degree 
	$$ 
	\deg H \leq 6(L +1) \deg h.
	$$
\end{prop}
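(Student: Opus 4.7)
The plan is to prove the degree bound as a direct consequence of the addition formulas and to establish non-constancy by a pole-counting argument on $U$. Throughout, set $R_l = j_l D + W_0$ for $0 \leq l \leq L$; the hypothesis $j_l D + W_0 \notin \Theta(\Fq)$ ensures each $R_l$ lies in $U(\Fq)\cup\{\cO\}$.

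For the degree bound, I view $h(Q+R_l)$ as the composition of $h\in \Fq(U)$ with the translation morphism $T_{R_l}\colon Q\mapsto Q+R_l$. Writing $h=h_1/h_2$ in reduced form and using \cref{lemma:addition_grant} to substitute $z_{ij}(Q+R_l)$, $z_{ijk}(Q+R_l)$, $z(Q+R_l)$ in terms of the coordinates of $Q$, the bounds in \eqref{eq:add_deg} ($\deg z_{ij}^{R_l}\leq 3$, $\deg z_{ijk}^{R_l}\leq 4$, $\deg z^{R_l}\leq 6$) show that each substitution multiplies the degree by at most $6$. Hence $\deg(h\circ T_{R_l})\leq 6\deg h$, and bringing $H=\sum_{l=0}^L c_l (h\circ T_{R_l})$ to a common denominator yields $\deg H\leq 6(L+1)\deg h$.

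The non-constancy is the heart of the argument. Viewed as a rational function on $U$, $h\circ T_{R_l}$ has pole divisor supported on $(\Theta-R_l)\cap U$ with multiplicity $n$, since $h$ has pole divisor $n\Theta$ and $T_{R_l}$ is a morphism. If I can exhibit an $\Fq$-point $P\in (\Theta(\Fq)-R_L)\cap U(\Fq)$ that lies in none of the sets $(\Theta(\Fq)-R_l)\cap U(\Fq)$ for $l<L$, then at $P$ the only summand of $H$ that blows up is $c_L(h\circ T_{R_L})$; since $c_L\neq 0$, $P$ is a genuine pole of $H$, so $H$ cannot be constant.

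To produce such a $P$, I count as follows. The first condition in \eqref{eq:prop-L} forces $0<j_L-j_l<t$, so $R_L\neq R_l$, and a congruence argument shows $R_L=-R_l$ holds for at most one $l<L$, which I absorb into the count. For the remaining indices \cref{lem:1} gives $|\{q_{R_L}=0\}\cap\{q_{R_l}=0\}\cap U|\leq 20$, and since $\Theta\pm R\subseteq\{q_R=0\}$ the same bound applies to $(\Theta-R_L)\cap(\Theta-R_l)\cap U$. Meanwhile \cref{lemma:intersection}, applied with $R_L\in U(\Fq)$, yields $|(\Theta(\Fq)-R_L)\cap\Theta(\Fq)|\leq 2$, so $|(\Theta(\Fq)-R_L)\cap U(\Fq)|\geq|\Theta(\Fq)|-2$. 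The total count of points to exclude is at most $2+20L$, so $L<(|\Theta(\Fq)|-2)/20$ guarantees the existence of an admissible $P$. The hardest step will be the bookkeeping in this final count, together with the clean handling of the degenerate cases $R_L=-R_l$ and $R_L=\cO$ (in the latter I would pivot on a different index $l^*$ with $c_{l^*}\neq 0$, falling back on the non-constancy of $h$ itself if no such $l^*$ exists).
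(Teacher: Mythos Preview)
Your overall strategy --- the degree bound from the addition formulas \eqref{eq:add_deg}, and non-constancy by exhibiting an $\Fq$-point that is a pole of the $j_L$-term but of no earlier term, using \cref{lem:1} and \cref{lemma:intersection} for the count --- is exactly the paper's.

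The one substantive divergence is your handling of the possibility $R_L=-R_{l^*}$. You propose to ``absorb it into the count'', but this does not go through as stated: the coordinates $z_{11},z_{12},z_{22}$ are even under $R\mapsto -R$, so $q_{R_{l^*}}=q_{-R_{l^*}}=q_{R_L}$ identically, and \cref{lem:1} gives no bound whatsoever on $\{q_{R_L}=0\}\cap\{q_{R_{l^*}}=0\}\cap U$ for that index. The paper sidesteps the issue entirely by first passing to
\[
\tilde H(Q)=H(Q-W_0)=\sum_{l=0}^{L}c_l\,h(Q+j_lD),
\]
so that the shifts are $j_lD$ with no $W_0$. Then $j_LD=-j_lD$ would force $t\mid j_L+j_l$, which is impossible because $j_l+j_L\le j_{L-1}+j_L\le 2L+2|\Theta(\Fq)|+1<t$ by the first inequality in \eqref{eq:prop-L}. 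Non-constancy of $\tilde H$ transfers immediately to $H$. This is precisely what that first bound in \eqref{eq:prop-L} is designed for, and it dissolves the ``hardest step'' you flag.

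Two small corrections: since $\cO\in\Theta$ (cf.\ \eqref{eq:iota}), the hypothesis $R_l\notin\Theta(\Fq)$ already forces $R_l\in U(\Fq)$, so the degenerate case $R_L=\cO$ never arises; and the paper's final count is simply $|\Theta(\Fq)|-2-20L$, with no extra ``$+2$'' term.
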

\begin{proof}
First, we consider the case when $ W_0 = 0 $. Defining the function $ h_D : Q \mapsto h(Q + D) $ yields 
$$
H(Q) = \sum_{l=0}^{L-1} c_l h_{j_lD}(Q) + c_L h_{j_LD}(Q).
$$ 
We show that there exists $ Q \in \U $ such that it is a pole of $ h_{j_LD} $, but not a pole of any other terms $ h_{j_lD},$ for $ j_l<j_L $.

Observe that $h_{j_LD} $ has a pole at $ Q $ when $ Q \in \Theta(\Fq) - j_LD $.
From \cref{lem:1}, we know that $ \Theta(\Fq) -j_LD \subseteq \{q_{j_LD} = 0\} $.   Moreover, it follows from  \cref{lemma:intersection}, that
\begin{equation}\label{eq:card_j_L}
    |\Theta(\Fq) -j_LD|\geq |\Theta(\Fq)|-2.
\end{equation}

By \eqref{eq:prop-L}, we have
$$
   j_l + j_L \leq j_{L-1} + j_L \leq 2L + 2|\Theta(\Fq)| + 1 < t,
$$
for any $l< L$, whence $ j_lD \neq -j_LD $ for $ j_l<j_L $. 
Then by Lemma \ref{lem:1}, we obtain
$$
\left| \Big((\Theta(\Fq) - j_LD) \cap \U \Big) \cap \{q_{j_lD} = 0\} \right|\leq
\left| \{q_{j_LD}=0\} \cap \{q_{j_lD} = 0\} \cap \U\right|\leq 20 .
$$
Thus, by \eqref{eq:card_j_L} we have that
\begin{align}\label{eq:set}
		 &\left|\Big((\Theta(\Fq) - j_LD) \cap \U \Big) \setminus \left( \bigcup\limits_{l=0}^{L-1} \{q_{j_lD} = 0 \} \right)\right| \\ \notag
		 =&\left| \bigcap\limits_{l=0}^{L-1} \bigg( \Big((\Theta(\Fq) - j_LD) \cap \U \Big)  \setminus \{q_{j_lD} = 0\} \bigg)\right| \geq |\Theta(\Fq)|-2-20L.
\end{align}
By \eqref{eq:prop-L}, the set in \eqref{eq:set} is non-empty, and thus there exists a point $Q$ which is a pole of $h_{j_LD}$ but not a pole of any other term of $H$. Hence, $H$ is non-constant.

In the case when $ W_0 \neq 0 $, we can define 
$$
\widetilde{H}(Q) = H(Q-W_0) = \sum_{l=0}^{L} c_lh(Q + j_l D + W_0-W_0).
$$
Then, by the case $\widetilde{W}_0 = W_0 - W_0 = 0$, we obtain that $ \widetilde{H}$ is non-constant. 
Therefore, there exist $ Q_1,Q_2 \in \U $ such that $ \widetilde{H}(Q_1) \neq \widetilde{H}(Q_2) $. Hence, we obtain
$$
	H(Q_1-W_0) \neq H(Q_2 - W_0).
$$
This proves that $ H $ is non-constant.

To estimate the degree of $ H $, we first estimate the degree of the functions $ h_{j_lD}$. Let $l$ be arbitrary, define $R = j_l D + W_0$, 
Let $ z_{ij}^{R}, z_{ijk}^{R}, z^{R} $ be as defined in \eqref{eq:add_fn}, then we can write $ h_{R}(Q) $ as 
\begin{equation*}
	h_{R}(Q) =h\big(Q + R\big) =h\big(z_{11}^{R}(Q), \dots , z_{222}^{R}(Q),z^{R}(Q)\big).
\end{equation*}
It follows from \eqref{eq:add_deg} that
\begin{align*}
	\deg h_{R} \leq (\deg h) \big(\max \{ \deg z_{ij}^{R}, \deg z_{ijk}^{R}, \deg z^R \}\big) = 6 \deg h,
\end{align*}
and thus
\begin{equation*}
	\deg H \leq \deg \bigg( \sum_{l=0}^{L} c_l h_{R} \bigg) \leq 6(L+1) (\deg h).
\end{equation*}
In particular, the degree does not depend on $ W_0 $.

\end{proof}

\section{Further preliminaries}\label{sec:preliminaries}

\subsection{Linear complexity}

We need the following result on linear complexity, see \cite[Lemma~ 6]{langeCertainExponentialSums2005}.
\begin{lemma}\label{lemma:linComp}
Let $(s_n)$ be a linear recurrent sequence of order $L$ over any finite field $\Fq$ defined by a linear recursion
$$
s_{n+L}=c_0s_n +\dots + c_{L-1}s_{n+L-1}, \quad n\geq 0.
$$
Then for any $T\geq L+1$ and pairwise distinct positive integers $j_1,\dots, j_{T}$, there exist $a_1,\dots, a_T \in \Fq$, not all equal to zero, such that
$$
\sum_{i=1}^Ta_is_{n+j_i}=0, \quad n\geq 0.
$$
\end{lemma}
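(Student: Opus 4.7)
The plan is to reduce the statement to a pigeonhole/linear-algebra argument on $L$-dimensional coordinate vectors and then propagate the resulting relation to all indices using the recurrence itself.

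First I would observe that every shift $(s_{n+j})_{n\geq 0}$ of $(s_n)$ again satisfies the same order-$L$ linear recurrence
$$
s_{n+L+j} = c_0 s_{n+j} + c_1 s_{n+1+j} + \dots + c_{L-1} s_{n+L-1+j},\quad n\geq 0,
$$
so each such shifted sequence is uniquely determined by the vector of its first $L$ values. To each $j_i$ I would therefore associate
$$
v_i = \bigl(s_{j_i},\ s_{j_i+1},\ \dots,\ s_{j_i+L-1}\bigr) \in \Fq^{L}.
$$

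Since $T \geq L+1$, the vectors $v_1,\dots,v_T$ lie in an $L$-dimensional $\Fq$-vector space and must therefore be linearly dependent. Hence there exist $a_1,\dots,a_T\in\Fq$, not all zero, with
$$
\sum_{i=1}^{T} a_i v_i = 0,
$$
which is exactly the system of identities
$$
\sum_{i=1}^{T} a_i s_{n+j_i} = 0 \quad \text{for } n = 0,1,\dots,L-1.
$$

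Finally I would extend the identity to all $n\geq 0$ by induction. Assuming it holds for $n, n+1,\dots,n+L-1$, I multiply the $(n+k)$-th instance by $c_k$ and sum over $k=0,\dots,L-1$; by the recurrence applied to each shifted sequence $(s_{m+j_i})_{m\geq 0}$, the resulting combination equals $\sum_{i=1}^{T} a_i s_{n+L+j_i}$, which must therefore also vanish. This closes the induction and yields the claimed relation for all $n\geq 0$. There is no real obstacle here — the only thing to check carefully is that the same coefficients $c_0,\dots,c_{L-1}$ drive each shifted sequence, which is immediate from shifting the defining recursion.
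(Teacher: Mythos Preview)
Your argument is correct and is essentially the standard proof of this fact: associate to each shift $j_i$ its initial segment $v_i\in\Fq^L$, invoke linear dependence among $T\ge L+1$ vectors in an $L$-dimensional space, and propagate the resulting relation by induction via the recurrence. Note that the paper does not actually prove this lemma itself; it simply quotes it from \cite[Lemma~6]{langeCertainExponentialSums2005}, so there is nothing to compare against beyond confirming that your reasoning is sound, which it is.
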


\subsection{Vanishing loci of polynomials}

Let $\K$ be a field and for polynomials $f_1,\dots, f_k\in \K[\mathbf{X}]$ with $\mathbf{X}=(X_1,\dots, X_n)$. We denote
$$
V_\K(f_1,\dots, f_k)=\{\mathbf{x}\in\K^n: f_1(\mathbf{x})=\dots=f_k(\mathbf{x})=0 \} 
$$
and
$$
V(f_1,\dots, f_k)=V_{\overline{\K}}(f_1,\dots, f_k).
$$
The following result is a multidimensional version of B\'ezout's Theorem, see
\cite[Theorem~3.1]{schmid1995affine}.

\begin{lemma}\label{lemma:bezout}
Assume, that  $f_1,\dots, f_k\in \K[\mathbf{X}]$ and $\dim V(f_1,\dots, f_k) \leq 0$. Then
$$
| V(f_1,\dots, f_k)|\leq \prod_{i=1}^k \deg f_i.
$$
\end{lemma}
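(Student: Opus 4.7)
The plan is to prove the inequality by induction on $k$, reducing each step to a Bézout-type intersection bound for a variety and a hypersurface. For the base case $k=1$, the hypothesis $\dim V(f_1)\leq 0$ forces $n\leq 1$ by Krull's Hauptidealsatz, so the statement reduces to the fact that a univariate polynomial of degree $\deg f_1$ has at most $\deg f_1$ roots.

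For the inductive step, set $V'=V(f_1,\dots,f_{k-1})$. Since adjoining one polynomial drops the dimension by at most one, $\dim V'\leq 1$. If $\dim V'\leq 0$, the bound is immediate from the inductive hypothesis applied to $V'$, since $V(f_1,\dots,f_k)\subseteq V'$ and $\deg f_k\geq 1$. The substantive case is $\dim V'=1$, where $V(f_1,\dots,f_k)$ consists of the points at which the one-dimensional set $V'$ meets the hypersurface $V(f_k)$. Here I would invoke the classical bound $|V'\cap V(f_k)|\leq (\deg V')(\deg f_k)$, with $\deg V'$ computed as the number of intersection points with a generic hyperplane summed over irreducible components. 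To close the induction, I would strengthen the inductive hypothesis to include the auxiliary degree bound $\deg V(f_1,\dots,f_{k-1})\leq \prod_{i=1}^{k-1}\deg f_i$ whenever the dimension drops at each step, which is essentially Heintz's refinement of the Bézout inequality.

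The main obstacle is handling improper intersections: if some irreducible component of $V'$ already lies inside $V(f_k)$, then adjoining $f_k$ does not strictly drop the dimension and one cannot directly multiply degrees. The standard remedy is to pass to the projective closure via homogenization, apply projective Bézout for a complete intersection of $n$ hypersurfaces in $\mathbb{P}^n$ (where the intersection count with multiplicity equals the product of degrees), and note that additional points at infinity can only decrease the affine count. An equivalent route is to perturb by generic hyperplanes to put oneself in the proper-intersection regime and conclude by an upper-semicontinuity argument on the fibre cardinality. Schmid's argument in \cite{schmid1995affine} carries this out carefully in the affine setting, avoiding the subtleties of excess intersection, and yields the bound in the form cited here.
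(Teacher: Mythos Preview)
The paper does not prove this lemma; it simply quotes \cite[Theorem~3.1]{schmid1995affine}. Since your final paragraph also defers to Schmid, the two ultimately coincide. The inductive sketch you offer beforehand, however, has a genuine gap. You assert that ``adjoining one polynomial drops the dimension by at most one'' and deduce from $\dim V(f_1,\dots,f_k)\leq 0$ that $\dim V'=\dim V(f_1,\dots,f_{k-1})\leq 1$. This is false: a high-dimensional component of $V'$ may miss $V(f_k)$ entirely and simply vanish from the intersection. In $\mathbb{A}^3$ take $f_1=X_1X_2$, $f_2=X_1X_3$, $f_3=X_1-1$; then $V'=V(X_1)\cup V(X_2,X_3)$ has dimension~$2$, yet $V(f_1,f_2,f_3)=\{(1,0,0)\}$ is zero-dimensional. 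So your case split ``$\dim V'\leq 0$'' versus ``$\dim V'=1$'' is not exhaustive. Krull's Hauptidealsatz only gives the other direction---each component of $V'\cap V(f_k)$ has codimension at most one in the component of $V'$ that contains it---and says nothing about components of $V'$ that disappear.

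This is precisely the improper-intersection difficulty you flag in your last paragraph, but it already obstructs the dimension bookkeeping, not just the degree-multiplication step. The repair is not to bound $\dim V'$ at all but to propagate a degree bound over \emph{all} components simultaneously (as in Heintz's inequality), which is what Schmid's argument does. Your qualifier ``whenever the dimension drops at each step'' gives the game away: since the dimension need not drop at each step, the strengthened hypothesis you propose cannot be maintained along the induction in the form stated.
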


\section{Main results}\label{sec:main}

Recall, that for $D\in J_C(\Fq)$ we have defined the sequence $(W_n)$ recursively by \eqref{eq:S}, namely
$$
W_n=D + W_{n-1}=nD+ W_0, \quad n=1,2,\dots,
$$
with some initial value $W_0\in J_C(\Fq)$. We can assume, that $D,W_0\in J_C(\Fq)$ and thus  all sequence elements are defined over $\Fq$. 
Let $h\in \Fq(J_C)$ and consider the sequence
\begin{equation}\label{eq:w}
w_n=h(W_n), \quad n=0,1,\dots
\end{equation}
with the convention that, if $h$ is not defined at $W_n$, we set $w_n=0$. Clearly, $(W_n)$ and $(w_n)$, are purely periodic sequences, and if $t$ is the order of $D$ in $J_C(\Fq)$, then $t$ is the period length of $(W_n)$. However, $(w_n)$ may have a smaller period length.

\begin{theorem}\label{thm:main}
	Let $ C $ be a hyperelliptic curve of genus $ 2 $. Let $ h \in \Fq(U) $ be a rational function in the function field of the Jacobian with pole divisor of the form $ n\Theta$, for $n \geq 1$.  If $W_0 \in J_C(\Fq)$ and $ D \in J_C(\Fq) $ is of order $ t $ and $ w_n$ is defined by \eqref{eq:w}, then 
	\begin{equation*}
		L(w_n,N) \geq \left\lfloor c 
		   \frac{\min\{t,N\}}{ q \deg h }\right\rfloor
	\end{equation*}
	for some absolute constant $c>0$. 
\end{theorem}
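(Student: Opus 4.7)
Plan. Argue by contradiction: assume $L = L(w_n,N) < \lfloor c \min(t,N)/(q \deg h) \rfloor$ for a small absolute constant $c > 0$ to be chosen. The Hasse--Weil estimates \eqref{eq:number_of_points} and \eqref{eq:size_of_jac} specialized to $g=2$ give $|\Theta(\Fq)| = q + O(q^{1/2})$ and $t \leq (q^{1/2}+1)^4$; if either of the numerical hypotheses of Proposition~\ref{prop:main_1} fails (i.e.\ $L \geq (|\Theta(\Fq)|-2)/20$ or $L \geq (t-1)/2 - |\Theta(\Fq)|$), the claimed bound is already satisfied for $c$ small enough, so we may assume both hold. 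The definition of the $N$-th linear complexity produces $c_0,\ldots,c_{L-1}\in \Fq$ so that, with $c_L=-1$ and $f(X)=\sum_{i=0}^L c_i X^i$, we have $\sum_{i=0}^L c_i w_{n+i}=0$ for $0 \leq n \leq N-L-1$. Call $j \in [0, L+|\Theta(\Fq)|]$ \emph{bad} if $jD + W_0 \in \Theta(\Fq)$; at most $|\Theta(\Fq)|$ indices are bad. The multiples $g(X)f(X)$ with $\deg g \leq |\Theta(\Fq)|$ form a $(|\Theta(\Fq)|+1)$-dimensional subspace of $\Fq[X]_{\leq L+|\Theta(\Fq)|}$, and polynomials supported on the good indices form a subspace of dimension $\geq L+1$; in the ambient $(L+|\Theta(\Fq)|+1)$-dimensional space the intersection is therefore non-trivial. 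Pick a nonzero $p(X) = g(X) f(X) = \sum_{l=0}^{K} c'_l X^{j_l}$ with $j_0 < \cdots < j_K$ all good and $c'_K \neq 0$. Since $p$ is a polynomial multiple of $f$, the original recurrence yields
\begin{equation*}
  \sum_{l=0}^{K} c'_l\, w_{n + j_l} = 0, \qquad 0 \leq n \leq N - L - |\Theta(\Fq)| - 1.
\end{equation*}

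Proposition~\ref{prop:main_1} (invoked with its $W_0$ replaced by $W_0 - D$ and shifts $j_l+1 \in [1,L+|\Theta(\Fq)|+1]$, so that the hypothesis $(j_l+1)D + (W_0-D) = j_l D + W_0 \notin \Theta(\Fq)$ coincides with our goodness condition) then asserts that
\begin{equation*}
  H(Q) = \sum_{l=0}^K c'_l\, h(Q + j_l D + W_0) \in \Fq(U)
\end{equation*}
is non-constant with $\deg H \leq 6(K+1)\deg h \leq 6(L+1)\deg h$. Call $n$ \emph{clean} if $nD \notin \Theta(\Fq)$ and $(n + j_l)D + W_0 \notin \Theta(\Fq)$ for every $l$; for clean $n$ in the valid range, $Q = nD$ lies in $U(\Fq)$, no $h$-term is a pole, and the above relation gives $H(nD) = \sum_l c'_l w_{n+j_l} = 0$. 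Each failure of cleanness excludes at most one residue class modulo $t$, totalling at most $(K+2)|\Theta(\Fq)|$ excluded residues, and $n \mapsto nD$ is injective on $\{0,\dots,t-1\}$; hence we obtain at least
\[
  \min(N,t) - C_1\, L\, |\Theta(\Fq)| \;\geq\; \min(N,t) - C_1'\, L\, q
\]
distinct zeros of $H$ in $U(\Fq)$ for absolute constants $C_1, C_1'$.

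It remains to upper bound such zeros. Writing $H = H_1/H_2$ in lowest terms, $V(H_1)\cap U$ is a proper closed subvariety of the two-dimensional irreducible surface $U = V(f_2,\ldots,f_7)$, hence of dimension $\leq 1$. Slicing by the affine hyperplanes $\{z_{22}=\alpha\}$ for $\alpha \in \Fq$, the intersection $V(f_2,\dots,f_7,H_1,z_{22}-\alpha)$ is zero-dimensional for all but finitely many $\alpha$, and Lemma~\ref{lemma:bezout} then bounds it by $(\deg H_1)\prod_i \deg f_i \leq C_2 \deg H$; the finitely many exceptional $\alpha$'s each contribute at most $O(q)$ further $\Fq$-points. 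Summing,
\[
  |\{Q \in U(\Fq) : H(Q) = 0\}| \leq C_3 (\deg H)\, q \leq 6 C_3 (L+1)(\deg h)\, q.
\]
Comparing with the lower bound forces $\min(N,t) \leq C_4\, L\, (\deg h)\, q$, contradicting the assumption once $c$ is chosen small enough. The main technical obstacle is the B\'ezout-type step just above: controlling the exceptional slices where $V(H_1)\cap U$ contains a positive-dimensional component, and ensuring the degree bound for $H$ as a rational function on the affine surface $U$. The construction of shifts in the first paragraph, although conceptually just a dimension count, is also delicate in that it must produce shifts that exactly match the hypotheses of Proposition~\ref{prop:main_1}, even when the consecutive shifts $0,1,\dots,L$ given by the linear recurrence themselves fail to satisfy the goodness condition.
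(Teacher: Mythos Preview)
Your overall strategy mirrors the paper's: build a non-constant $H$ via Proposition~\ref{prop:main_1}, exhibit many zeros of $H$ among the points $nD$, then bound those zeros by slicing $U$ with affine hyperplanes and applying Lemma~\ref{lemma:bezout}. Two steps need repair.

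The inequality $K\le L$ you use for $\deg H\le 6(L+1)\deg h$ is unjustified: your dimension count only produces \emph{some} nonzero multiple $p=gf$ supported on the good indices, and such a $p$ may have as many as $L+|\Theta(\Fq)|+1$ nonzero coefficients; a large $K$ also violates the hypothesis $K<(|\Theta(\Fq)|-2)/20$ of Proposition~\ref{prop:main_1}, so non-constancy of $H$ is not guaranteed either. The paper instead invokes Lemma~\ref{lemma:linComp}: select the $L+1$ smallest good indices $j_0<\cdots<j_L\le L+|\Theta(\Fq)|+1$ and use that the $L+1$ shifted sequences $(s_{n+j_i})_{n\ge0}$ all lie in the $L$-dimensional solution space of the recurrence, hence are linearly dependent. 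This yields a relation on exactly $L+1$ shifts and makes Proposition~\ref{prop:main_1} applicable with parameter $L$.

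For the obstacle you flag, the paper does not bound the exceptional slices separately; it rules them out. Slicing by $g_r=\X_{12}-r$, the paper argues that if some $V(f_2,\dots,f_7,G_1,g_r)$ still had dimension one then $g_r$ would vanish on every zero of $G_1$ in $U$, in particular on every $nD$ with $n\in\cN$. But a direct count gives $|\{D\in U(\Fq):z_{12}(D)=r\}|\le 2|\Theta(\Fq)|+2(q+1)$, splitting according to whether the underlying points $P,Q$ with $x_Px_Q=-r$ lie in $C(\Fq)$ or are $\F_{q^2}$-conjugate; since the running assumptions on $L$ force $|\cN|>2|\Theta(\Fq)|+2(q+1)$, this is impossible. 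Hence every slice is zero-dimensional, Lemma~\ref{lemma:bezout} applies uniformly over all $r\in\Fq$, and one obtains $|V_{\Fq}(f_2,\dots,f_7,G_1)|\le 216\,q\,\deg H$ with no exceptional contribution. The same device would work with your slice $z_{22}=\alpha$, provided you carry out the analogous point count and tune the lower bound on $|\cN|$ to exceed it.
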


Theorem~\ref{thm:main} yields a lower bound on the linear complexity of the components \eqref{eq:coordinates} in the Mumford representation of $(W_n)$. Clearly, $u_2$ is constant on $\U$. However, applying the result for $h=-z_{22}, -z_{12}, z_{122}$ and $z_{222}$ and observing that $\Theta$ is a pole of them (cf. \eqref{eq:iota}), we get a lower bound on the linear complexity of the components $u_1,u_0, v_1$ and $v_0$ by \eqref{eq:repr}. As all functions $-z_{22}, -z_{12}, z_{122}$ and $z_{222}$ have degree one, we get \eqref{eq:example}.

The result is non-trivial if $t>c q$ for some constant which may depend on $\deg h$. However, the most important case is that $t$ is close to $|J_C(\Fq)|$ which is $(1+o(1))q^2$ by \eqref{eq:size_of_jac}.

\begin{proof}
Let $\Theta(\Fq)$ be defined as in \eqref{eq:Theta}. Then by \eqref{eq:number_of_points} we have $|\Theta(\Fq)|=|C(\Fq)|=q+O(q^{1/2})$. We can assume
\begin{equation}\label{eq:assumption2}
    N \geq 6|\Theta(\Fq)|+2q +3
\end{equation}
and
\begin{equation}\label{eq:assumption}
    L<\min\left\{\frac{t-2-5|\Theta(\Fq)|-2q}{2|\Theta(\Fq)|}, \frac{N-3-6|\Theta(\Fq)|-2q}{2|\Theta(\Fq)|+1}, \frac{t-1}{2}-|\Theta(\Fq)|, \frac{|\Theta(\Fq)|-2}{20} \right\}
\end{equation}
since otherwise, we can choose the absolute constant $c$ small enough so that the theorem holds trivially.

Let $L$ be the $N$th linear complexity of the sequence $(w_n)$ and let $c_0,\dots, c_L\in \Fq$ such that
$$
w_{n+L}=c_0w_n+\dots+c_{L-1}w_{n+L-1}, \quad 0\leq n\leq N-L-1.
$$
Let $(s_n)$ be the infinite linear recurrent sequence with $s_n=w_n$ for $0\leq n<N$ and 
$$
s_{n+L}=c_0s_n+\dots+c_{L-1}s_{n+L-1}, \quad n \geq 0.
$$
Let 
\begin{equation}\label{eq:j_L}
1\leq j_0<\dots<j_{L}\leq |\Theta(\Fq)|+L+1
\end{equation}
be the smallest integers such that $W_{j_i}\not\in\Theta, 0 \leq i \leq L $. 
Then by \cref{lemma:linComp}, there exist $a_0\dots, a_{L}\in\Fq$ not all equal to zero, such that
$$
\sum_{i=0}^La_is_{n+j_i}=0, \quad n\geq 0,
$$
whence, 
\begin{equation}\label{eq:rec}
\sum_{i=0}^{L}a_iw_{n+j_i}=0, \quad 0\leq n <  \min \left\{ N-j_L, t \right\} 
\end{equation}
as $w_n=s_n$ for $0\leq n\leq N$.

We define 
\begin{equation}\label{eq:T}
T = \min \left\{ N-j_L, t \right\}
\end{equation}
and put
$$
\mathcal{N} = \Big\{ 0 \leq n<T : \ nD,  nD \pm (j_i D + W_0)\not\in \Theta(\Fq), {~\text{for}~} 0\leq i \leq L\Big\}.
$$ 

We observe that there are at most $|\Theta(\Fq)|$ points $nD\in\Theta(\Fq)$ and similarly for each $j=\pm j_i$ ($i=0,\dots, L$), there are  $2|\Theta(\Fq)|$ elements $nD \pm (j_iD + W_0) \in\Theta(\Fq)$. Hence, by \eqref{eq:assumption} we have,
\begin{equation}\label{eq:N}
|\mathcal{N}| \geq T - 2L|\Theta(\Fq)| - 3|\Theta(\Fq)|>2|\Theta(\Fq)|+2(q+1).
\end{equation}

Define
\begin{equation}\label{min_zeros_F}
	H(Q) = \sum_{i=0}^{L} a_{i} h( Q+ j_{i}D +W_0). 
\end{equation}
For $Q=nD$, $n\in\cN$, using \cref{lemma:addition_grant}, we see that $H$ is well-defined.
By \eqref{eq:rec}, $H$ vanishes on $Q=nD$, $n\in\cN$. We give an upper bound on the number of zeros of $H$ to get the results together with \eqref{eq:T} and \eqref{eq:N}.

Let us fix a representation of $H$ as a rational function $G_1/G_2\in\Fq(\mathbf{\X})$. Then this set is finite and contains the zeros of $H$ and thus
\begin{equation}\label{eq:N_V}
    |\cN|\leq |  V_{\Fq}(f_2,\dots,f_7,G_1 )|
\end{equation}
where $f_2, \dots, f_7 $ are the defining equations of $\U$. See \cref{sec:def_eq_J}.

In order to estimate the size of \eqref{eq:N_V}, for $r\in\Fq$, write $g_r(\mathbf{\X})=\X_{12}-r$. Clearly,
\begin{equation}\label{eq:slicing}
  V_{\Fq}(f_2,\dots,f_7,G_1)\subset \bigcup_{r\in\Fq}   V(f_2,\dots,f_7,G_1, g_r).
\end{equation}
We claim that for any $r\in\Fq$,
\begin{equation}\label{eq:finite_dim}
    \dim(V(f_2,\dots,f_7,G_1,g_r)) = 0.
\end{equation}
From \cref{prop:main_1}, we know that $H$ is non-constant, therefore,
$$
\dim(V(f_2,\dots,f_7,G_1)) = 1.
$$
Hence, if \eqref{eq:finite_dim} were not true, then $\dim(V(f_2,\dots,f_7,G_1,g_r)) = 1$ and so
$$
g_r \in \langle f_2,\dots,f_7,G_1 \rangle.
$$
Let $D \in \U$ such that $G_1(D) = 0$. Then we also have $g_r(D) = 0$. As all $nD$ for $n\in\cN$ are zeros of $G_1$, it has at least 
\begin{equation}\label{eq:claim}
    2|\Theta(\Fq)|+2(q+1) + 1
\end{equation}
zeros by \eqref{eq:N}.
On the other hand, if $D=[P+Q-2\cO]\in U(\Fq)$, with $P=(x_P,y_P), Q=(x_Q,y_Q)$, is a zero of $g_r$, then
$$
g_r(D)= z_{12}(D)-r=-x_Px_Q-r=0.
$$
If $P,Q\in C(\Fq)$, then $P$ determines $Q$ apart from sign, therefore there are at most $2|\Theta(\Fq)|$ such zeros. On the other hand, if $P,Q\in C(\F_{q^2})$ and $P$ and $Q$ are conjugated, then we must have $x_Q=x_P^q$ and thus $x_P^{q+1}=-r$. As there are at most $q+1$ solutions, there are at most $2(q+1)$ such divisors. Then it yields that $g_r$ has at most $2|\Theta(\Fq)|+2(q+1)$ zeros over $\Fq$, which contradicts \eqref{eq:claim}. Then we have proved \eqref{eq:finite_dim}.

By Lemma~\ref{lemma:bezout} and \eqref{eq:finite_dim} we have
$$
|V(f_2,\dots,f_7,G_1,g_r)|\leq \prod_{i=2}^7 \deg f_i \cdot \deg G_1\leq 216 \deg G_1, 
$$
and thus by \eqref{eq:slicing},
\begin{equation}
       V_{\Fq}(f_2,\dots,f_7,G_1)\leq  216 q\deg G_1\leq 216 q\deg H. 
\end{equation}
Then it follows from Proposition~\ref{prop:main_1}, \eqref{eq:N} and \eqref{eq:N_V} that
\begin{align*}
T-2L|\Theta(\Fq)|-3|\Theta(\Fq)| \leq |\cN| \leq  216 q \deg H \leq 1296(L+1)q \deg h. 
\end{align*}
Whence \eqref{eq:j_L} and \eqref{eq:T} yields the result with some absolute constant $c>0$.
\end{proof}

\section{Comments}
The usage of elliptic curves in pseudorandom number generation is an extensive research direction,
see for example the survey paper \cite{survey_Shparlinski} for a discussion of properties of pseudorandomness of the elliptic curve case of the sequence \eqref{eq:S} and further references as well as other constructions.

Despite some results on the application of hyperelliptic curves in pseudorandom number generation (see \cite{ShparlinskiLange05,Rezaeian,Reza2}) this
line of research has never been studied systematically. In Theorem~\ref{thm:main}, we obtain lower bounds on the linear complexity of the sequence derived from~\eqref{eq:S}. 

The bound we obtain is non-trivial and sufficient for application, however we conjecture that stronger bound for the $N$-th linear complexity holds. Numerical experiments suggest that if $h$ is a coordinate function in \eqref{eq:coordinates}, then for most cases $L(w_n,t)=\lceil t/2 \rceil$.

Additional to the $N$-th linear complexity,
other pseudorandom properties, like uniform distribution, also need to be investigated. The main tool would be the higher genus analogue of \cite{KohelShparlinski} where the authors obtained bounds on exponential sums over elliptic curves.

 \section*{Acknowledgement}
The authors wish to thank Arne Winterhof for the valuable discussions and Igor Shparlinski for useful comments.
The authors were supported by the Austrian Science Fund
Project P31762.

\appendix
\section{}
\subsection{Defining equations of the Jacobian}
\label{sec:def_eq_J}
Let 
$$
S= \K[\X_0, \X_{11},\X_{12},\X_{22},\X_{111},\X_{112},\X_{122},\X_{222},\X] $$ 
be a polynomial ring over field $\K$, with characteristic $p \neq 2$. Following \cite{grant1990formal}, in particular Theorem 2.5, Theorem 2.11 and Corollary 2.15, we define $f_i $ as follows: 
\begin{align*}
    f_1 = &\X^2 + \X_{11}^2\X_{12} + b_1\X_{11}^2\X_{22} +   b_2\X_{11}^2\X_{12}\X_{22} - b_3\X_{11}\X_{22}^2 + b_4\X_{12}\X_{22}^2 \\
    &-b_5\X_{22}^3 + 2b_1\X\X_{11} -2b_2\X\X_{12} + 2b_3\X\X_{22}+ (b_3-b_1b_2)\X_{11}\X_{12} \\
    &+(b_2^2-b_1b_3)\X_{11}\X_{22} + (b_1b_4-b_2b_3-b_5)\X_{12}\X_{22}-b_1b_5\X_{22}^2 \\
    & +2(b_1b_3 -b_2^2)\X + (b_1b_4-b_5)\X_{11} +b_2(b_2^2 - b_1b_3 )\X_{12} \\
    &(b_3b_4 - b_2b_5)\X_{22} + b_1b_3b_4 - b_2^2b_4 - b_3b_5, \\
    f_2 =&2\X - \X_{11}\X_{22} + \X_{12}^2 - b_2\X_{12} + b_4, \\
    f_3 =&\X_{112} -\X_{222}\X_{12} + \X_{122}\X_{22}, \\
    f_4 =&\X_{111} + \X_{222}\X_{11} + \X_{122}\X_{12} - 2\X_{112}\X_{22} - 2b_1\X_{112} + b_2\X_{122}, \\
    f_5 =&\X_{122}^2 - \X_{11}\X_{22}^2 + 2\X\X_{22} + \X_{11}\X_{12} - b_1\X_{11}\X_{22} - b_2\X_{12}\X_{22} \\
    & +2b_1\X - b_1b_2\X_{12} + b_4\X_{22} + b_1b_4 - b_5 , \\
    f_6 =& \X_{222}^2 - \X_{22}^3 - \X_{12}\X_{22} - b_1\X_{22}^2 - \X_{11} - b_2\X_{22} - b_3 , \\
    f_7 =&\X_{122}\X_{222} - \X_{12}\X_{22}^2 + \X -b_2\X_{12} - b_1\X_{12}\X_{22}, \\
    f_8 =&\X_{111}^2 - \X_{11}^3 - b_3\X_{11}^2 -b_4\X_{11}\X_{12} + 3b_5\X_{11}\X_{22} + 2b_5\X \\
    &+(4b_1b_5 - b_2b_4)\X_{11} - 3b_2b_5\X_{12} + (4b_3b_5-b_4^2)\X_{22} \\
    &4b_1b_3b_5 + b_4b_5 - b_1b_4^2 - b_2^2b_5, \\
    f_9 =& -\X_{111}\X_{112} + b_1\X_{111}\X_{122} -b_2\X_{112}\X_{122} + b_3\X_{112}\X_{222} \\
    &-b_4\X_{122}\X_{222} + b_5\X_{222}^2 -\X^2 -b_1 \X \X_{11} + b_2 \X \X_{12} - b_3 \X \X_{22} \\
    &-b_3\X_{11}\X_{12} + b_1b_3\X_{11}\X_{22} - (b_5 + b_1b_4)\X_{12}\X_{22} + 2b_1b_5\X_{22}^2 \\
    & -2(b_1b_3 + b_4)\X  + (2b_2b_4 + b_1b_2b_3 +b_1b_5 - b_3^2 -b_1^2b_4)\X_{12} \\
    &- 2b_5\X_{11} + 2b_5(b_1^2 - b_2)\X_{22} + b_1b_2b_5 - b_1b_3b_4 - 2b_3b_5 , \\
    f_{10} =& \X_{122}^2 - \X_{111}\X_{122} + \X_{11}\X - b_3\X_{11}\X_{22} + 2b_4\X_{12}\X_{22} -3b_5\X_{22}^2 \\
    &+2b_3\X + (b_1b_4 - b_2b_3 - b_5)\X_{12} - 2b_1 b_5 \X_{22} + b_3b_4 - b_2b_5, \\
    f_{11} =&\X_{111}\X_{222} - \X_{112}\X_{122} -2\X\X_{12} + \X_{11}^2 - 2b_1 \X_{11}\X_{12} \\
    &+3b_2 \X_{11}\X_{22} - 2b_3 \X_{12}\X_{22} + b_4\X_{22}^2 -5b_2\X +b_3\X_{11} \\
    &+ (3b_2^2 - 2b_1b_3)\X_{12} + (b_1b_4 - b_5)\X_{22} - 2b_2b_4, \\
    f_{12}=&\X_{122}^2 - \X_{112}\X_{222} + \X_{22}\X + 2\X_{11}\X_{12} - b_1\X_{11}\X_{22} + 2b_1\X \\
    &+(b_3-b_1b_2)\X_{12} + b_1b_4 - b_5, \\
    f_{13} =&\X_{111}\X_{12} -\X_{112}\X_{11} - b_4\X_{122} + 2b_5\X_{222}, \\
    f_{14} =& 2\X_{122}\X_{11} - \X_{112}\X_{12} -\X_{111}\X_{22} - b_2\X_{112} + 2b_3\X_{122} - b_4\X_{222}.
\end{align*}
One can show that $f_1\in \langle f_5,f_6,f_7\rangle$
and the vanishing locus of these polynomials homogenized with respect to the variable $\X_0$ forms a set of defining equations for the Jacobian $J_C$, i.e 
$$
J_C = V(f_2^h,\dots,f_{14}^h) = \{ z \in \mathbb{P}^8(\bar{\K}) : f_i^h(z) = 0, 2 \leq i \leq 14 \}
$$

\subsection{Rational embedding for the Jacobian} \label{app:rational function}
Recall \cref{eq:iota}, where for any $D \in U$ with $ \psi(D) = (x_1,y_1)+ (x_2,y_2)- 2\cO $, the image of $D$ under the embedding $\iota : J_C \rightarrow \mathbb{P}^8$ is given by
\begin{equation*}
    \iota(D) = (1:z_{11}:z_{12}:z_{22}:z_{111}:z_{112}:z_{122}:z_{222}:z)
\end{equation*}
where, $z_{ij}, z_{ijk}$ are rational functions in the coordinates $x_1, x_2, y_1, y_2$ of $\psi(D)$ as shown below. For more details, see \cite[Equation 1.4]{grant1990formal}.  

\begin{align*}
    z_{11}= &\frac{(x_1 + x_2)(x_1x_2)^2 + 2b_1(x_1x_2)^2 + b_2(x_1 + x_2)x_1x_2 + 2b_3x_1x_2}{(x_1-x_2)^2}\\
    & +\frac{b_4(x_1 + x_2) + 2b_5 - 2y_1y_2}{(x_1-x_2)^2},\\
    z_{12} =& -x_1x_2,\\
    z_{22} =& x_1 + x_2, \\
    z_{111} =& \frac{y_2 \Psi(x_1,x_2) - y_1 \Psi(x_2,x_1)}{(x_1 - x_2)^3}, \text{~where,}\\
    \Psi(x_1,x_2) = &4b_5 + b_4(3x_1 + x_2) + 2b_3x_1(x_1 + x_2) + b_2x_1^2(x_1 + 3x_2) \\
    & + 4b_1x_1^3x_2 + x_1^3x_2(3x_1 + x_2),\\
    z_{112} =& \frac{y_1x_2^2 - y_2x_1^2}{x_1- x_2}, \\
    z_{122} =& -\frac{y_1x_2 - y_2x_1}{x_1- x_2}, \\
    z_{222} =& \frac{y_1 - y_2}{x_1- x_2}. \\
    z =& \frac{1}{2}(z_{11}z_{22} - z_{11}^2 + b_2z_{12} - b_4 )
\end{align*} 
We consider $U$ embedded in $\mathbb{A}^8$ by considering the isomorphism given by 
\begin{equation*}
    (1:z_{11}:z_{12}:z_{22}:z_{111}:z_{112}:z_{122}:z_{222}:z) \mapsto (z_{11},z_{12},z_{22},z_{111},z_{112},z_{122},z_{222},z)
\end{equation*}

\subsection{Addition formulas}
\label{sec:addition_formulas}
In \cref{lemma:addition_grant}, we stated the group law on $U$ as given in \cite[Theorem 3.3]{grant1990formal}. This reference gives instructions as to how one could compute $z_{112}(Q+R),z_{122}(Q+R),z_{222}(Q+R)$ but does not state them explicitly in the paper. To estimate the degree of these functions, we computed the formulas for $z_{112}(Q+R),z_{122}(Q+R),z_{222}(Q+R)$ as follows:  
\begin{align*}
     z_{112}(Q+R)= &-\frac{1}{2}z_{112}(Q)-\frac{1}{2}z_{112}(R) + \frac{1}{16}\frac{q_{2}(Q,R)q_{11}(Q,R)}{q(Q,R)^2}\\ &+\frac{1}{8}\frac{q_{1}(Q,R)q_{12}(Q,R)}{q(Q,R)^2} -\frac{1}{16}\frac{q_{112}(Q,R)}{q(Q,R)}
     - \frac{1}{8}\frac{q_{2}(Q,R)(q_{1}(Q,R))^2}{q(Q,R)^3} \\  
     &+ \frac{3}{8}(z_{11}(Q)+z_{11}(R))\frac{q_{2}(Q,R)}{q(Q,R)} + \frac{3}{8}(z_{12}(Q)+z_{12}(R))\frac{q_{1}(Q,R)}{q(Q,R)} \\
     z_{122}(Q+R)= &-\frac{1}{2}z_{122}(Q)-\frac{1}{2}z_{122}(R) + \frac{1}{16}\frac{q_{1}(Q,R)q_{22}(Q,R)}{q(Q,R)^2}\\ &+\frac{1}{8}\frac{q_{2}(Q,R)q_{12}(Q,R)}{q(Q,R)^2} -\frac{1}{16}\frac{q_{122}(Q,R)}{q(Q,R)} \\
     &- \frac{1}{8}\frac{q_{1}(Q,R)(q_{2}(Q,R))^2}{q(Q,R)^3}  
     + \frac{3}{4}(z_{12}(Q)+z_{12}(R))\frac{q_{2}(Q,R)}{q(Q,R)} \\
     z_{222}(Q+R)= &-\frac{1}{2}z_{222}(Q)-\frac{1}{2}z_{222}(R) + \frac{3}{16}\frac{q_{2}(Q,R)q_{22}(Q,R)}{q(Q,R)^2}-\frac{1}{16}\frac{q_{222}(Q,R)}{q(Q,R)} \\
     &- \frac{1}{8} \left( \frac{q_{2}(Q,R)}{q(Q,R)} \right) ^3 
     + \frac{3}{4}(z_{22}(Q)+z_{22}(R))\frac{q_{2}(Q,R)}{q(Q,R)} \\
     z(Q+R) =& \frac{1}{2}(z_{11}(Q+R)z_{22}(Q+R) - z_{11}^2(Q+R) + b_2z_{12}(Q+R) - b_4 )
\end{align*}

To evaluate the addition formulas from \cref{lemma:addition_grant}, we need the following functions:
\newcommand{\p}[2]{z_{#1#2}}
\newcommand{\pp}[3]{2z_{#1#2#3}}
\newcommand{\po}{(2z - b_2 z_{12} + b_4)}

\begin{align*}
    q(Q,R)= &z_{11}(Q)-z_{11}(R)+z_{12}(Q)z_{22}(R)-z_{12}(R)z_{22}(Q), \\
    q_1(Q,R) = &\pp111(Q) - \pp111(R) + \pp112(Q)\p22(R) - \pp112(R)\p22(Q) \\
    & +\pp122(R)\p12(Q) - \pp122(Q)\p12(R), \\ 
    q_2(Q,R) =& \pp112(Q) - \pp112(R) + \pp122(Q)\p22(R) - \pp122(R)\p22(Q) \\
    & +\pp222(R)\p12(Q) - \pp222(Q)\p12(R), \\
    q_{11}(Q,R)=& 4b_3 q(Q,R) + 4b_4(\p12(Q)-\p12(R)) + 4(\po(Q)\p12(R))\\
    &-4(\po(R)\p12(Q))-8b_5(\p22(Q) - \p22(R)) \\ &+2(\pp112(Q)\pp122(R) - \pp112(R)\pp122(Q)),\\
    q_{12}(Q,R) =& 4b_3(\p12(Q)-\p12(R)) + 2b_2(\p12(Q)\p22(R)) \\ &-2b_2(\p12(R)\p22(Q)) - 4(\p11(Q)\p12(R)-\p11(R)\p12(Q))\\
    & +2(\po(Q)\p22(R)-\po(R)\p22(Q)) \\
    & - 2b_4(\p22(Q) - \p22(R)) + \pp222(R)\pp112(Q)-\pp222(Q)\pp112(R), \\
    q_{22}(Q,R) =& 8b_1(\p12(Q)\p22(R)-\p12(R)\p22(Q)) + 4b_2\p12(Q)\\
    & -4b_2\p12(R) -8(\p11(Q)\p22(R)-\p11(R)\p22(Q)) \\
    & - 4(\po(Q)-\po(R)) \\
    & + 2(\pp122(Q)\pp222(R) -\pp122(R)\pp222(Q)),\\
    q_{111}(Q,R) =& 4b_3q_1(Q,R) \\
    & + 4(\pp111(Q)\p22(Q)\p12(R)-\pp111(R)\p22(R)\p12(Q)) \\
    & + \pp122(R)(2\p12(Q)(6\p11(Q) - 2\p11(R)+4b_3)-4b_4\p22(Q))\\
    & - \pp122(Q)(2\p12(R)(6\p11(R) - 2\p11(Q)+4b_3)-4b_4\p22(R))\\
    & + \pp112(Q)(\p12(R)(12\p12(R) - 8\p12(Q)+4b_2)+4b_4)\\
    & - \pp112(R)(\p12(Q)(12\p12(Q) - 8\p12(R)+4b_2)+4b_4)\\
\end{align*}
\subsubsection{Formulas for $q_{ijk}$}
\label{sec:addition_formulas_extended}
    There are multiple ways to compute formulas for $q_{ijk}$, e.g. 
    $$
    q_{ijk} (Q,R) = \mathcal D_i(q_{jk})(Q,R) = \mathcal D_j(q_{ik})(Q,R) = \mathcal D_k(q_{ij})(Q,R)
    $$
    where $\mathcal D_i$ is the differential operator as defined in the proof of \cite[ Theorem 3.3 ]{grant1990formal}. We used the Python package \texttt{SymPy} to compute the formulas and chose the following expressions for $q_{ijk}$:
	\begin{align*}
		q_{111} &= \mathcal D_1(q_{11}), \\
		q_{112} &= \mathcal D_2(q_{11}), \\
		q_{122} &= \mathcal D_2(q_{12}), \\
		q_{222} &= \mathcal D_2(q_{22}).
	\end{align*}

	\begin{align*}
	q_{111}(Q,R) = &~\pp112(R)
	\left(- 12 \p12(Q)^{2} + \p12(Q) \left(8 \p12(R) - 4 b_{2}\right) - 4 \p22(Q) b_{3} - 4 b_{4}
	\right) \\ 
	&+\pp111(R)
	\left(- 4 \p12(Q) \p22(R) - 4 b_{3}
	\right) \\ 
	&+\pp111(Q)
	\left(4 \p12(R) \p22(Q) + 4 b3
	\right) \\ 
	&+\pp112(Q)
	\left(- 8 \p12(Q) \p12(R) + 12 \p12(R)^{2} + 4 \p12(R) b_{2} + 4 \p22(R) b_{3} + 4 b_{4}
	\right) \\ 
	&+\pp122(Q)
	\left(4 \p11(Q) \p12(R) - 12 \p11(R) \p12(R) - 12 \p12(R) b_{3} + 4 \p22(R) b_{4}
	\right) \\ 
	&+\pp122(R)
	\left(12 \p11(Q) \p12(Q) + \p12(Q) \left(- 4 \p11(R) + 12 b_{3}\right) - 4 \p22(Q) b_{4}
	\right) \\
	q_{112}(Q,R) = &~\pp222(Q)
	\left(4 \p11(Q) \p12(R) - 4 \p12(R) b_{3} - 8 b_{5}
	\right) \\ 
	&+\pp112(Q)
	\left(- 4 \p11(R) + 4 \p12(R) \p22(Q) + \p12(R) \left(12 \p22(R) + 8 b_{1}\right) + 4 b_{3}
	\right) \\ 
	&+\pp112(R)
	\left(4 \p11(Q) + \p12(Q) \left(- 12 \p22(Q) - 4 \p22(R) - 8 b_{1}\right) - 4 b_{3}
	\right) \\ 
	&+\pp122(Q)
	(- 8 \p11(R) \p22(R) - 8 \p12(Q) \p12(R) - 4 \p12(R)^{2} - 4 \p12(R) b_{2} \\
	&+ 4 \p22(R) b_{3} + 4 b_{4} ) \\ 
	&+\pp122(R)
	\left(8 \p11(Q) \p22(Q) + 4 \p12(Q)^{2} + \p12(Q) \left(8 \p12(R) + 4 b_{2}\right) \right.\\
    &- \left. 4 \p22(Q) b_{3} - 4 b_{4} \right) \\ 
	&+\pp222(R)
	\left(\p12(Q) \left(- 4 \p11(R) + 4 b_{3}\right) + 8 b_{5}
	\right) \\
	q_{122}(Q,R) = &~\pp112(R)
	\left(- 6 \p22(Q)^{2} + \p22(Q) \left(- 2 \p22(R) - 4 b_{1}\right) - 2 b_{2}
	\right) \\ 
	&+\pp122(R)
	\left(- 4 \p11(Q) + \p22(Q) \left(4 \p12(R) - 2 b_{2}\right) - 4 b_{3}
	\right) \\ 
	&+\pp222(Q)
	\left(2 \p11(Q) \p22(R) - 4 \p11(R) \p22(R) - 2 \p12(R)^{2} - 4 \p12(R) b_{2} - 2 b_{4}
	\right) \\ 
	&+\pp112(Q)
	\left(2 \p22(Q) \p22(R) + 6 \p22(R)^{2} + 4 \p22(R) b_{1} + 2 b_{2}
	\right) \\ 
	&+\pp222(R)
	\left(4 \p11(Q) \p22(Q) - 2 \p11(R) \p22(Q) + 2 \p12(Q)^{2} + 4 \p12(Q) b_{2} + 2 b_{4}
	\right) \\ 
	&+\pp122(Q)
	\left(4 \p11(R) - 4 \p12(Q) \p22(R) + 2 \p22(R) b_{2} + 4 b_{3}
	\right) \\
	q_{222}(Q,R) = &~\pp222(R)
	\left(- 12 \p11(Q) + 4 \p11(R) + \p12(Q) \left(12 \p22(Q) + 16 b_{1}\right)
	\right) \\ 
	&+\pp122(R)
	\left(- 8 \p12(Q) - 8 \p12(R) - 12 \p22(Q)^{2} - 16 \p22(Q) b_{1} - 8 b_{2}
	\right) \\ 
	&+\pp112(Q)
	\left(- 4 \p22(Q) - 8 \p22(R)
	\right) \\ 
	&+\pp222(Q)
	\left(- 4 \p11(Q) + 12 \p11(R) + \p12(R) \left(- 12 \p22(R) - 16 b_{1}\right)
	\right) \\ 
	&+\pp112(R)
	\left(8 \p22(Q) + 4 \p22(R)
	\right) \\ 
	&+\pp122(Q)
	\left(8 \p12(Q) + 8 \p12(R) + 12 \p22(R)^{2} + 16 \p22(R) b_{1} + 8 b_{2}
	\right)
	\end{align*}

\bibliographystyle{amsplain}
\bibliography{Bibliography.bib}
\end{document}